\newtheorem{theorem}{Theorem}[section]
\newtheorem*{theorem*}{Theorem}
\newtheorem*{proposition*}{Proposition}
\newtheorem{proposition}{Proposition}[section]
\newtheorem{lemma}{Lemma}[section]
\newtheorem*{lemma*}{Lemma}
\newtheorem{remark}{Remark}[section]
\def\hpic #1 #2 {\mbox{$\begin{array}[c]{l} \epsfig{file=#1,height=#2}
\end{array}$}}
\def\vpic #1 #2 {\mbox{$\begin{array}[c]{l} \epsfig{file=#1,width=#2}
\end{array}$}}
\newcommand  {\rmn}\romannumeral
\newcommand{\IC}[0]{\mathbb{C}} 
 \newcommand{\IT}[0]{\mathbb{T}}
 \newcommand{\IZ}[0]{\mathbb{Z}}
 \newcommand{\CF}[0]{\mathcal{F}}
\newcommand{\CO}[0]{\mathcal{O}} 
\newcommand{\CQ}[0]{\mathcal{Q}} 
\newcommand{\CU}[0]{\mathcal{U}}
\newcommand{\TT}[0]{\tilde{T}}
\newcommand{\ts}[0]{\tilde{s}}
\begin{document}
\title[On the cyclic automorphism and its fixed-point algebra]{On the cyclic automorphism of the Cuntz algebra and its fixed-point algebra}
\author{Valeriano Aiello} 
\address{Valeriano Aiello,
Mathematisches Institut, Universit\"at Bern, Alpeneggstrasse 22, 3012 Bern, Switzerland
}\email{valerianoaiello@gmail.com}
\author{Stefano Rossi} 
\address{Stefano Rossi,
Dipartimento di Matematica, Universit\`a degli studi Aldo Moro di Bari, Via E. Orabona 4, 70125 Bari, Italy
}\email{stefano.rossi@uniba.it}

\begin{abstract}
We investigate the structure of the fixed-point algebra of $\CO_n$ under the action of  the cyclic permutation of the generating isometries.
We prove that it  is $*$-isomorphic with $\CO_n$, thus generalizing a result of Choi and Latr\'emoli\`ere on $\CO_2$. 
As an application of the technique employed, we also describe the fixed-point algebra of $\CO_{2n}$ under the exchange automorphism.
\end{abstract}

\maketitle

\section{Introduction}
In \cite{CL} Choi and  Latr\'emoli\`ere proved that the so-called flip-flop automorphism $\lambda_f$ of the Cuntz algebra $\CO_2$ enjoys rather remarkable properties.
We recall that $\CO_2$ is the universal $C^*$-algebra generated by two isometries, $S_1$ and $S_2$, such that $S_1S_1^*+ S_2S_2^*=1$, and
$\lambda_f \in{\rm Aut}(\CO_2)$ acts by switching $S_1$ and $S_2$ with each other. By definition the flip-flop is an automorphism of order $2$. In particular, it implements
an automorphic action of $\IZ_2$ on $\CO_2$. Now actions of as simple mathematical objects as finite groups on $C^*$-algebras may nevertheless  give rise to fixed-point subalgebras or crossed products
which are difficult to describe concretely, cf. \cite{Izumi1, Izumi2}. In fact, the flip-flop is one of the few known examples of any interest where the associated structures are not only comparatively
easy to describe but  also very surprising. Indeed, both fixed-point subalgebra and crossed product by its action
turn out to be $*$-isomorphic with $\CO_2$ itself, \cite{CL}. To the best of our knowledge, not many examples of this sort have appeared in the literature.
Therefore, the present note aims to exhibit more examples with properties similar to those of the flip-flop. It is quite natural to try
to find such examples on the Cuntz algebras $\CO_n$ for any natural $n\geq 2$, and this is exactly what is done here.
Now there are at least two ways to define a generalized flip-flop on $\CO_n$.
The first we consider is what we  call the cyclic automorphism $\lambda_C$, which act on the generating isometries by translation mod $n$.
This is now an automorphism of order $n$. Interestingly, its fixed-point subalgebra is still $*$-isomorphic with $\CO_n$, as we show
in Section \ref{cyclic}. One may of course ask whether $\CO_n$ is acted upon by an automorphism of order $2$ such that
the corresponding fixed-point subalgebra is again $*$-isomorphic with $\CO_n$. It is this question that naturally leads us to consider
the so-called exchange automorphism, already appeared in \cite{ACRS}. This is the automorphism $\lambda_E$ associated with the $n\times n$ matrix $E=(\delta_{n-i+1,j})_{i,j=1}^n$. 
Unlike the cyclic automorphism, the exchange automorphism can fix one of the generating isometries. This happens 
precisely when $n$ is odd, and in this case the fixed isometry is $S_{\frac{n+1}{2}}$. The exchange automorphism, though, proves to be far more difficult to treat.
We describe the fixed-point subalgebra of the exchange subalgebra only when it  does not fix any of the generating isometries, namely when $n$ is even.
The resulting $C^*$-algebra is no longer a Cuntz algebra. However, it is generated by $n$ copies of $\CO_n$, as we show in Section
\ref{exchange}.

Quite interestingly, the main ingredient of the result of Choi and Latr\'emoli\`ere is a convenient realization
of $\CO_2$ as a subalgebra of the tensor product  $M_2(\IC)\otimes\CO_2\cong M_2(\CO_2)$, in which the flip-flop, despite being outer, can be implemented by
a unitary $Z\in M_2(\CO_2)$ of a particularly simple form. However, this realization fails to be compatible
with the inclusion of $\CO_2$ in the so-called $2$-adic ring $C^*$-algebra $\CQ_2$, which has been the focus of much recent work \cite{LarsenLi, ACR, ACR2, ACR3, ACR4, ACR5, AR, ACRsurvey}, in a sense that we make
precise in Section \ref{nogo}.

\section{Preliminaries and Notation}
We denote by $\CO_n$   the universal C$^*$-algebra generated by $n$ (proper) isometries $S_1, \ldots , S_n$ such that $\sum_{i=1}^nS_iS_i^*=1$. It is well known that
$\CO_n$ is a simple $C^*$-algebra, \cite{Cuntz1}. We denote by $\CF_n^k\subset \CO_n$ the linear span of words in the generating isometries of the type
$S_{\alpha_1}S_{\alpha_2}\cdots S_{\alpha_k}S_{\beta_1}^*S_{\beta_2}^*\cdots S_{\beta_k}^*$ with $\alpha_i, \beta_i\in\{1, 2, \ldots, n\}$
for any $i=1, 2, \ldots, k$. 
Endomorphims of $\CO_n$ are in bijection with its unitaries via the so-called Cuntz-Takesaki correspondence, cf. \cite{Cuntz2}.
This is realized as follows. To any unitary $u\in\CO_n$ it is possible to associate an endomorphims $\lambda_u$ defined by
$\lambda_u(S_i):=uS_i$, $i=1, 2, \ldots, n$.  Conversely, if one starts with an endomorphism $\lambda$, it is easy to see that
$u:=\sum_{i=1}^n\lambda(S_i)S_i^*$ is a unitary and $\lambda=\lambda_u$.\\
On the Cuntz algebra $\CO_2$ we can consider the so-called
 flip-flop automorphism $\lambda_f\in $Aut$(\CO_2)$, first appeared under this name in \cite{Archbold}. This is nothing but the involutive automorphism that switches the generating isometries $S_1$ and $S_2$, namely
$\lambda_f(S_1)=S_2$, $\lambda_f(S_2)=S_1$. Obviously, $\lambda_f$ is associated with the unitary $S_1S_2^*+S_2S_1^*$.
The cyclic automorphism $\lambda_C\in{\rm End}(\CO_n)$ is defined as $\lambda_C(S_i)=S_{i+1}$ for $i=1, \ldots , n-1$ and   $\lambda_C(S_n)=S_{1}$. If sums between indices are understood mod $n$, then the foregoing formula
simply becomes $\lambda_C(S_i)=S_{i+1}$ for every $i=1, 2, \ldots, n$.
It is easy to see that $\lambda_C$ is associated with the unitary matrix $C=(\delta_{i-j,1})_{i,j=1}^n$  identified with the corresponding  unitary in $\CF_n^1$ (again, the operations between indices are understood mod $n$). Note that when $n=2$, $\lambda_f=\lambda_C$.
The exchange automorphism  (introduced in \cite{ACRS}) is the automorphism $\lambda_E$, with $E=(\delta_{n-i+1,j})_{i,j=1}^n\in \CF_n^1$. 
Again, when $n=2$ we have $\lambda_E=\lambda_f$.
This means that both exchange automorphism and cyclic automorphism may be regarded as natural generalizations of the flip-flop.
Finally, the $2$-adic ring $C^*$-algebra $\CQ_2$ is the universal C$^*$-algebra generated by a unitary $U$ and a (proper) isometry $S_2$ such that $S_2U=U^2S_2$ and $S_2S_2^*+US_2S_2^*U^*=1$. 
It contains a copy of the Cuntz algebra $\CO_2$ since $S_2$ and $S_1:=US_2$ satisfy the Cuntz relations. 
As shown in \cite{ACR}, the flip-flop uniquely extends  to an automorphism $\widetilde{\lambda_f}$ of $\CQ_2$, determined $\lambda_f(U):=U^*$.

Throughout the paper sums and differences between indices of the generating isometries of $\CO_n$  will always be understood mod $n$.

\section{The fixed-point algebra under the cyclic automorphism}\label{cyclic}
In \cite[Section 1]{CL}, Choi and Latr\'emoli\`ere proved that the fixed-point algebra of $\CO_2$ with respect to the action of the flip-flop automorphism is $*$-isomorphic with $\CO_2$.
This section shows how their result can be generalized to $\CO_n$, for every $n\geq 2$, when the flip-flop is replaced by the cyclic automorphism $\lambda_C$. We first discuss
  the spectral  decomposition of $\CO_n$ into the direct sum of the $n$ eigenspaces of $\lambda_C$, which are
commonly referred to as the
spectral eigenspaces associated with the action of $\IZ_n$ through $\lambda_C$. Henceforth $i$ will always denote
the imaginary unit of $\IC$ when it appears in the argument of the exponential function.

\begin{proposition}\label{prop31}
The algebra $\CO_n$ decomposes as a direct sum of Banach spaces  $\oplus_{k=1}^n \CO_n^{\lambda_C}v^k$, where $v=\sum_{k=1}^n e^{2\pi k i/n}	S_kS_k^*\in\CU(\CO_n)$, and $\CO_n^{\lambda_C}$ is the fixed-point  subalgebra of $\CO_n$ w.r.t. the cyclic automorphism.
\end{proposition}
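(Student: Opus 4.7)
The plan is to exploit the standard spectral decomposition attached to the $\IZ_n$-action $\lambda_C$ on $\CO_n$, and then identify the $k$-th spectral eigenspace with $\CO_n^{\lambda_C}v^k$ by using that $v$ itself is an eigenvector of $\lambda_C$ whose eigenvalue is a primitive $n$-th root of unity. For any finite-group action on a $C^*$-algebra, the maps
\[
E_k(x) := \frac{1}{n}\sum_{j=0}^{n-1} e^{-2\pi i k j/n}\, \lambda_C^j(x),\qquad k=0,1,\ldots,n-1,
\]
are bounded linear projections with $\sum_k E_k = \mathrm{id}$, whose images
$M_k := \{x\in\CO_n:\lambda_C(x)=e^{2\pi i k/n}x\}$ therefore furnish a direct-sum decomposition $\CO_n=\bigoplus_{k=0}^{n-1} M_k$ of Banach spaces, with $M_0=\CO_n^{\lambda_C}$. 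So the task reduces to matching each $M_k$ with one of the summands $\CO_n^{\lambda_C}v^k$.

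First I would check that $v$ is unitary (immediate, since the $S_jS_j^*$ are mutually orthogonal projections summing to $1$, hence $v^k=\sum_j e^{2\pi i jk/n}S_jS_j^*$ and in particular $v^n=1$). Next comes the key computation:
\[
\lambda_C(v)=\sum_{k=1}^n e^{2\pi i k/n} S_{k+1}S_{k+1}^* = e^{-2\pi i/n}\sum_{j=1}^n e^{2\pi i j/n} S_{j}S_{j}^* = e^{-2\pi i/n}\,v,
\]
after reindexing $j=k+1\bmod n$. Consequently $v^k\in M_{-k\bmod n}$, so that $\CO_n^{\lambda_C}\, v^k\subseteq M_{-k}$ for every $k$.

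For the reverse inclusion, given $x\in M_{-k}$ I would form $y:=x(v^*)^k$ and compute $\lambda_C(y)=e^{-2\pi ik/n}x\cdot e^{2\pi ik/n}(v^*)^k=y$, so $y\in\CO_n^{\lambda_C}$ and $x=y\,v^k\in\CO_n^{\lambda_C}v^k$. This yields $M_{-k}=\CO_n^{\lambda_C}v^k$, and as $k$ ranges over $1,\ldots,n$ the index $-k\bmod n$ ranges over all residues (with $k=n$ giving $M_0=\CO_n^{\lambda_C}\cdot 1$), so the displayed decomposition is just a rewriting of the spectral decomposition. I do not anticipate a real obstacle: the only point demanding any care is the eigenvalue computation for $v$, since everything else is the routine Fourier decomposition for a finite cyclic group action, and the unitarity of $v$ ensures the summands are not only linearly independent but also closed Banach subspaces (being images of the continuous projections $E_{-k}$).
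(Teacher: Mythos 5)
Your argument is correct and follows essentially the same route as the paper: both hinge on the computation $\lambda_C(v)=e^{-2\pi i/n}v$ and then identify the spectral subspaces of the $\IZ_n$-action with $\CO_n^{\lambda_C}v^k$ by multiplying by powers of the unitary $v$ (the paper phrases this via the averaged expectation $F$ and the identity $x=\sum_{k}F(xv^k)v^{-k}$, which is your $E_{-k}$ decomposition in disguise). If anything, your write-up is slightly more explicit about why the sum is direct and the summands closed.
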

\begin{proof}
First we check that $v$ is an eigenvector corresponding to the eigenvalue $e^{-2\pi i/n}$. Indeed,
\begin{align*}
e^{2\pi i/n}\lambda_C(v)&=\left(\sum_{k=1}^{n-1} e^{2\pi k i/n}	S_{k+1}S_{k+1}^*+S_1S_1^*\right)\\
&=e^{2\pi i/n}\left(\sum_{h=2}^{n} e^{2\pi (h-1) i/n}	S_{h}S_{h}^*+S_1S_1^*\right)\\
&=e^{2\pi i/n}\left(\sum_{h=1}^n e^{2\pi h i/n}	S_{h}S_{h}^*\right)\\
&=v\; .
\end{align*}
Accordingly, $v^j$ is an eigenvector  corresponding to the eigenvalue $e^{-2\pi ji/n}$, for any $j=1, 2, \ldots, n$.
The conditional expectation onto $\CO_n^{\lambda_C}$ is now obtained by averaging the action
of $\IZ_n$ through $\lambda_C$, that is
$$
F(x):=\frac{1}{n}\sum_{k=1}^n\lambda_C^{k-1}(x)\qquad x\in\CO_n\; .
$$
\noindent
For every $x\in \CO_n$, we note that $x=\sum_{k=0}^{n-1}F(xv^k)v^{-k}$; the terms of the sum on the right-hand side of this equality are called the spectral components of $x$ associated, respectively, with  the  eigenvalue
$e^{\frac{2\pi ki}{n}}$.

\end{proof}
The following technical lemma is at the core of the proof of the main result of the present section. First, it  provides an explicit  copy
of $\CO_n$ as a subalgebra of $M_n(\CO_n)$ by exhibiting the new generators. Second, it shows  that  the copy of $\CO_n$ thus obtained is a small subalgebra
of $M_n(\CO_n)$ insofar as
the entries of a $n\times n$ matrix of $M_n(\CO_n)$ sitting in this copy are constrained by rigid relations involving the cyclic automorphism itself.
\begin{lemma}\label{lemmaFix}
Let $V:=(V_{i,j})_{i,j=1}^n$ be the matrix in $M_n(\CO_n)$  with $V_{i, j}=S_j$ for any $i,j=1, 2, \ldots , n$.
Let $Z$ be the diagonal matrix $(e^{2\pi (k-1)i/n}\delta_{k,h})_{h,k=1}^n$. Then, the C$^*$-algebra generated by 
$
T_l:=\frac{1}{\sqrt{n}}Z^{l-1}VZ^{-l+1}
$, $l=1, 2, \ldots , n$, 
is $*$-isomorphic with $\CO_n$. Moreover, $C^*(T_1, T_2, \ldots, T_n )$ consists of all the elements 
$A=(A_{h,k})_{h,k=1}^n$ in $M_n(\CO_n)$ such that $A_{h+1,k+1}=\lambda_C(A_{h,k})$ for all
$h, k=1, 2, \ldots ,  n$ (mod $n$).
\end{lemma}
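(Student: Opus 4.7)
The plan is to verify two claims contained in the lemma. First, that the $T_l$'s satisfy the Cuntz relations, so that $C^*(T_1,\ldots,T_n)\cong \CO_n$ by universality and simplicity. Second, that this subalgebra exhausts
\[
\CB:=\{A\in M_n(\CO_n):A_{h+1,k+1}=\lambda_C(A_{h,k})\text{ for all }h,k\}.
\]

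First I would compute the entries of $T_l$. Since $Z$ is diagonal with $Z_{h,h}=e^{2\pi i(h-1)/n}$ and $V_{h,k}=S_k$, one obtains $(T_l)_{h,k}=\frac{1}{\sqrt n}\,e^{2\pi i(h-k)(l-1)/n}\,S_k$. Using $S_j^*S_k=\delta_{j,k}$ and orthogonality of characters on $\IZ_n$, a direct calculation gives $T_l^*T_m=\delta_{l,m}I$, and from the identity $(T_lT_l^*)_{h,k}=\frac{1}{n}\,e^{2\pi i(h-k)(l-1)/n}\cdot 1$ one reads off $\sum_l T_lT_l^*=I$. Since $T_1\neq 0$ and $\CO_n$ is simple, the universal property forces $C^*(T_1,\ldots,T_n)\cong\CO_n$. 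The forward inclusion $C^*(T_1,\ldots,T_n)\subseteq\CB$ is then immediate: the explicit formula gives $\lambda_C((T_l)_{h,k})=(T_l)_{h+1,k+1}$, and $\CB$ is a closed $*$-subalgebra of $M_n(\CO_n)$.

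The reverse inclusion is the main point. I would produce two building blocks inside $C^*(T_1,\ldots,T_n)$. On the one hand, averaging $T_lT_l^*$ against the dual characters yields the scalar indicator of the $m$-th cyclic diagonal:
\[
E^{(m)}:=\sum_{l=1}^n e^{-2\pi i m(l-1)/n}\,T_lT_l^*,\qquad (E^{(m)})_{h,k}=\delta_{h-k,m}.
\]
On the other hand, summing the formula for $T_l$ over $l$ gives $\frac{1}{\sqrt n}\sum_l T_l=\mathrm{diag}(S_1,\ldots,S_n)$; sandwiching this between two $E^{(m)}$'s produces the diagonally embedded generators
\[
\tilde S_j:=E^{(1-j)}\,\mathrm{diag}(S_1,\ldots,S_n)\,E^{(j-1)}=\mathrm{diag}(S_j,\lambda_C(S_j),\ldots,\lambda_C^{n-1}(S_j))
\]
for every $j=1,\ldots,n$. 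Consequently, the unital injective $*$-homomorphism $\pi:\CO_n\to\CB$ defined by $\pi(x):=\mathrm{diag}(x,\lambda_C(x),\ldots,\lambda_C^{n-1}(x))$ takes values in $C^*(T_1,\ldots,T_n)$, since its image is the closed $*$-subalgebra of $\CB$ generated by $\{\pi(S_j)=\tilde S_j\}_{j=1}^n$.

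To conclude, any $A\in\CB$ is entirely determined by its first row $(a_1,\ldots,a_n)\in\CO_n^n$ via $A_{h,k}=\lambda_C^{h-1}(a_{k-h+1})$, and a direct entry-by-entry comparison yields the decomposition $A=\sum_{p=1}^n \pi(a_p)\,E^{(1-p)}$, a finite sum of elements of $C^*(T_1,\ldots,T_n)$. The genuine difficulty lies precisely in this reverse inclusion: each $T_l$ interlaces a matrix ingredient (the character $e^{2\pi i(h-k)(l-1)/n}$) with a Cuntz ingredient ($S_k$), and disentangling them to reconstruct arbitrary $\lambda_C$-equivariant matrices relies on the two complementary character averages that produce $E^{(m)}$ and $\mathrm{diag}(S_1,\ldots,S_n)$.
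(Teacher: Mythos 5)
Your proof is correct and follows essentially the same route as the paper: verify the Cuntz relations for the $T_l$, note that the $\lambda_C$-equivariance condition is preserved under products, and prove the reverse inclusion by producing the cyclic shift matrices and the diagonally embedded copies of the generators, then decomposing an arbitrary equivariant matrix along its first row. Your $E^{(m)}$ is exactly the paper's $w^m$ (there obtained as $\alpha(v)^m$ for the unitary $v$ of Proposition \ref{prop31}) and your $\tilde S_j$ is the paper's $s_j$ (there obtained by conjugating $s_1=\alpha(T)$ by powers of $w$), so the two arguments differ only in how these auxiliary elements are packaged.
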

\begin{proof}
It is a matter of routine computations to see that 
$$
(T_l)_{h, k}=\frac{1}{\sqrt{n}} S_k e^{\frac{2\pi i l(h-k)}{n}}
$$ 
for any
$l, h, k=1, 2, \ldots , n$.  We first show that each $T_l$ is an isometry. To this end, it is enough to make the relative
computation only for $T_1$. For any $h, k=1, 2, \ldots, n$, we have
\begin{align*}
(T_1^*T_1)_{h,k}&=\sum_{j=1}^n (T_1^*)_{h,j}(T_1)_{j,k}=\sum_{j=1}^n (T_1)_{j,h}^*(T_1)_{j,k}\\
 &=\sum_{j=1}^n S_{h}^*S_{k}n^{-1}
 =\delta_{h, k}
\end{align*}
In order to show that $C^*(T_1, T_2, \ldots, T_n)$ is $^*$-isomorphic
with $\CO_n$, we also need show that $\sum_{l=1}^n T_lT_l^*=1$. For any $h, k=1, 2, \ldots, n$ we have
\begin{align*}
\sum_{l=1}^n(T_lT_l^*)_{h,k}&= \sum_{l=1}^n\sum_{j=1}^n (T_l)_{h,j} (T_l^*)_{j,k}\\
&= \sum_{l=1}^n\sum_{j=1}^n (T_l)_{h,j} (T_l)_{k,j}^*\\
&=n^{-1}\sum_{l=1}^n\sum_{j=1}^n (S_{j}e^{2\pi i (h-j)l/n})(S_{j}e^{2\pi i (k-j)l/n})^*\\
&=n^{-1}\sum_{l=1}^n\sum_{j=1}^n (S_{j}e^{2\pi i (h-j)l/n})(S_{j}^*e^{2\pi i (-k+j)l/n})\\
&=n^{-1}\sum_{l=1}^n\sum_{j=1}^n S_{j}S_{j}^*e^{2\pi i (h-k)l/n}\\
&=n^{-1}\sum_{j=1}^n S_{j}S_{j}^*\left(\sum_{l=1}^n e^{2\pi i (h-k)l/n}\right)=\delta_{h,k}I
\end{align*}
and the claimed equality is thus proved.
This allows us to define an automorphism $\alpha: \CO_n\to C^*(T_1,\ldots, T_n)$ given by
\begin{equation}\label{alpha}
\alpha(S_k)=T_k, \quad k=1,2, \ldots, n.\\
\end{equation}

\smallskip

Let $A:=(a_{hk})_{h, k=1}^n$ and $B:=(b_{hk})_{h, k=1}^n$ two matrices such that $a_{h+1,k+1}=\lambda_C(a_{h,k})$ and $b_{h+1,k+1}=\lambda_C(b_{h,k})$. Then it is easy to see that $(c_{h,k})_{h,k=1}^n=AB$ enjoys the same property, namely $c_{h+1,k+1}=\lambda_C(c_{h,k})$ for any $h$, $k=1,\ldots , n$.
By definition $T_1, \ldots, T_n$ enjoy this property and thus all the elements $A=(A_{h,k})_{h,k=1}^n$ in  $C^*(T_1, T_2, \ldots, T_n )$ satisfy the condition $A_{h+1,k+1}=\lambda_C(A_{h,k})$ for all
$h, k=1, 2, \ldots ,  n$.\\

To prove the converse inclusion, we need to make some preliminary computations.\\
We  observe that $w:=\alpha(v)$ is equal to the matrix $(\delta_{p-q+1, 0})_{p,q=1}^n$, where $v=\sum_{k=1}^n e^{2\pi k i/n}	S_kS_k^*$ is 
the unitary considered in Proposition \ref{prop31}. Indeed, for 
any $p, q=1, 2, \ldots, n$ we have
\begin{align*}
w_{p,q}=\alpha(v)_{p,q}&=\sum_{k=1}^n e^{2\pi i k/n} (T_kT_k^*)_{p,q}\\
&=\sum_{k=1}^n\sum_{a=1}^n e^{2\pi i k/n} (T_k)_{p,a}(T_k^*)_{a,q}\\
&=\sum_{k=1}^n\sum_{a=1}^n e^{2\pi i k/n} (T_k)_{p,a}(T_k)_{q,a}^*\\
&=\sum_{k=1}^n\sum_{a=1}^n e^{2\pi i k/n} S_{a} S_{a}^*e^{2\pi i \frac{k(p-a+a-q)}{n}}\\
&=\sum_{k=1}^n\sum_{a=1}^n e^{2\pi i k/n} S_{a} S_{a}^*e^{2\pi i \frac{k(p-q)}{n}}\\
&=\sum_{a=1}^n \delta_{p-q+1,0} S_{a} S_{a}^* =\delta_{p-q+1, 0}I.
\end{align*}
We also observe that for $j=1, \ldots, n$, one has $(w^j)_{h,k}=\delta_{k-h,-j}$  for any $h$, $k=1,\ldots , n$. We give a proof by induction on $j$. The formula is true for $j=1$. Suppose that it is true for $j\geq 1$. 
Now for $j+1\leq n$ we have 
\begin{align*}
(w^{j+1})_{h,k}=(w^jw)_{h,k}&=\sum_{p=1}^n(w^j)_{h,p}(w)_{p,k}\\
&=\sum_{p=1}^n \delta_{p-h,-j}  \delta_{k-p,-1} =\delta_{k-h,-j-1}
 \end{align*}

Set now $T:=\frac{1}{\sqrt{n}}\sum_{k=1}^n S_k$ and $s_l:=(\lambda_C^{h-1}(S_l)\delta_{h,k})_{h,k=1}^n$, $l=1,\ldots , n$. We aim to verify that $s_l$ sits in $C^*(T_1, T_2, \ldots, T_n)$ for every $l=1, 2, \ldots, n$. To this end,
we first  show that $\alpha(T)=s_1$. Indeed, for any  $p, q=1, 2, \ldots, n$ we have
\begin{align*}
\alpha(T)_{p,q}&=\frac{1}{\sqrt{n}} \left(\sum_{l=1}^n (T_l)_{p,q}\right)\\
&=\frac{1}{\sqrt{n}} \left(\sum_{l=1}^n \frac{1}{\sqrt{n}} S_q e^{\frac{2\pi i l(p-q)}{n}}\right)\\
&=\frac{1}{n} S_q \left(\sum_{l=1}^n  e^{\frac{2\pi i l(p-q)}{n}}\right)\\
&=\left\{\begin{array}{cc}
S_p & \text{ if $p=q$}\\
0 & \text{ if $p\neq q$}
\end{array}\right.\\
&=(s_1)_{p,q}
 \end{align*}

In particular, $s_1$ is in $C^*(T_1, T_2, \ldots, T_n)$. We will see that the remaining $s_l$ are in 
$C^*(T_1, T_2, \ldots, T_n)$ by showing the equality $ws_lw^*=s_{l-1}$, which holds for any $l=1, 2, \ldots, n$ with the convention that for $l=1$ one has $ws_1w^*=s_{n}$. To this aim, we point out that 
 for any $h$, $k=1,\ldots , n$,  $(w^js_l)_{h,k}=S_{k+l-1}\delta_{k-h, -j}$, which is verified below
\begin{align*}
(w^js_l)_{h,k}&=\sum_{p=1}^n(w^j)_{h,p}(s_l)_{p,k}\\
&=\sum_{p=1}^n \delta_{p-h,-j} S_{l+k-1} \delta_{p,k}=S_{k+l-1}\delta_{k-h,-j}
 \end{align*}
But then we have
\begin{align*}
(ws_lw^*)_{h,k}&=\sum_{p=1}^n(ws_l)_{h,p}(w^*)_{p,k}\\
&=\sum_{p=1}^n   S_{p+l-1}\delta_{p-h,-1}    (w)_{k,p}\\
&=S_{h-1+l-1}\delta_{p-h,-1}    \delta_{p-k, -1}\\
&= S_{h+l-2} \delta_{h,k}=(s_{l-1})_{h,k}
 \end{align*}

 
\bigskip 

We finally move on to ascertain that the converse inclusion holds as well. First we observe that, for any $x\in \CO_n$, the diagonal matrix of the form $(\lambda_C^{h-1}(x)\delta_{h,k})_{h,k=1}^n$, $l=1,\ldots , n$, belongs to $C^*(T_1,\ldots , T_n)$. Indeed, this follows from the fact that $(\lambda_C^{h-1}(S_l)\delta_{h,k})_{h,k=1}^n$ is
equal to $s_l$ by definition and $s_l\in C^*(T_1,\ldots , T_n)$, for every $l=1,\ldots , n$.\\
Now let  $A=(A_{h,k})_{h,k=1}^n$ be a matrix in $M_n(\CO_n)$ such that $A_{h+1,k+1}=\lambda_C(A_{h,k})$ for all
$h, k=1, 2, \ldots ,  n$. In particular, $A$ is determined by the elements $A_{1,1}$, \ldots, $A_{1,n}\in \CO_n$.
For any $l=1,\ldots , n$, the diagonal matrices $D_l:=(\lambda_C^{h-1}(A_{1,l})\delta_{h,k})_{h,k=1}^n$ are in $C^*(T_1,\ldots , T_n)$.
Since $w=(\delta_{p-q+1, 0})_{p,q=1}^n, D_1, \ldots , D_n$ all belong to $C^*(T_1,\ldots , T_n)$, it suffices to show that $A=\sum_{l=0}^{n-1} w^lD_{l+1}$. But this can be ascertained by means of the following computations
\begin{align*}
\left( \sum_{l=0}^{n-1} w^l D_{l+1}\right)_{h,k}&=\sum_{p=1}^n(w^l)_{h,p}(D_{l+1})_{p,k}\\ 
&=\sum_{l=0}^{n-1}\sum_{p=1}^n  \delta_{p-h, -l}  \lambda_C^{k-1}(A_{1,l+1})\delta_{p,k} \\ 
&=    \lambda_C^{k-1}(A_{1,h-k+1})= A_{h,k}   
 \end{align*}
\end{proof}
As an easy yet useful application of the previous lemma, we get the following corollary, where we keep the same notation as in Lemma \ref{lemmaFix}.
\begin{lemma}\label{lemmaFix2}
The fixed-point algebra of $C^*(T_1,\ldots, T_n)$ under the 
order-$n$ automorphism Ad$(Z)$, with ${\rm Ad}(Z)(A):=ZAZ^*$ for $A\in C^*(T_1, T_2, \ldots, T_n)$, is given by the diagonal matrices $(\lambda_C^{h-1}(x)\delta_{h,k})_{h,k=1}^n\in M_n(\CO_n)$, where $x$ varies in $\CO_n$.
\end{lemma}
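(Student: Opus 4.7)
The plan is to combine a direct computation of how ${\rm Ad}(Z)$ acts on matrix entries with the concrete description of $C^*(T_1,\ldots,T_n)$ already obtained in Lemma \ref{lemmaFix}. Since $Z$ is the diagonal unitary $Z={\rm diag}(1,e^{2\pi i/n},\ldots,e^{2\pi(n-1)i/n})$, for any $A=(A_{h,k})\in M_n(\CO_n)$ one has
\begin{equation*}
(ZAZ^*)_{h,k}=e^{2\pi i(h-k)/n}A_{h,k},\qquad h,k=1,2,\ldots,n.
\end{equation*}
I would start from this identity and observe immediately that ${\rm Ad}(Z)(A)=A$ forces $A_{h,k}=0$ whenever $h\neq k$, so the fixed points of ${\rm Ad}(Z)$ in $M_n(\CO_n)$ are exactly the diagonal matrices.

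Next, I would intersect this condition with the membership constraint from Lemma \ref{lemmaFix}. An element of $C^*(T_1,\ldots,T_n)$ is a matrix $A=(A_{h,k})$ satisfying $A_{h+1,k+1}=\lambda_C(A_{h,k})$ for all $h,k$ (mod $n$). For a diagonal matrix this relation reduces to $A_{h+1,h+1}=\lambda_C(A_{h,h})$, so setting $x:=A_{1,1}\in\CO_n$ one recovers iteratively $A_{h,h}=\lambda_C^{h-1}(x)$. Therefore every fixed point of ${\rm Ad}(Z)$ inside $C^*(T_1,\ldots,T_n)$ has the claimed form $(\lambda_C^{h-1}(x)\delta_{h,k})_{h,k=1}^n$.

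Conversely, I would recall that in the final part of the proof of Lemma \ref{lemmaFix} it is explicitly verified that, for every $x\in\CO_n$, the diagonal matrix $(\lambda_C^{h-1}(x)\delta_{h,k})_{h,k=1}^n$ belongs to $C^*(T_1,\ldots,T_n)$, and such matrices are manifestly fixed by ${\rm Ad}(Z)$ since $Z$ is diagonal. This yields the reverse inclusion and completes the identification. There is essentially no obstacle here: the statement follows almost mechanically once Lemma \ref{lemmaFix} is in place, the only point requiring a line of computation being the action of ${\rm Ad}(Z)$ on matrix entries.
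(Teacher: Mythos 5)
Your proof is correct and follows essentially the same route as the paper: the paper's own (one-line) argument computes $({\rm Ad}(Z)(A))_{h,k}=e^{2\pi i(h-k)/n}A_{h,k}$ to force diagonality and then invokes the characterization of $C^*(T_1,\ldots,T_n)$ from Lemma \ref{lemmaFix} to pin down the diagonal entries as $\lambda_C^{h-1}(x)$. Your write-up merely makes explicit the intersection with the membership condition and the converse inclusion, both of which the paper leaves implicit.
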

\begin{proof}
Let $A$ be in $M_n(\CO_n)$. It is easy to see that the entry $(h,k)$ of Ad$(Z)(A)$ is $A_{h,k}e^{2\pi i (h-k)/n}$ and, therefore, it is fixed only if $h=k$ or $A_{h,k}=0$.
\end{proof}
We are finally in a position to prove the announced result.
\begin{theorem}
The fixed-point algebra of $\CO_n$ under $\lambda_C$ is $^*$-isomorphic with $\CO_n$.
Moreover, $\CO_n^{\lambda_C}$ is generated by ${\rm Ad}(v^l)(T)$ with $l=0, 1, \ldots, n-1$, where $T=\frac{1}{\sqrt{n}}\sum_{k=1}^n S_k$ and $v=\sum_{k=1}^n e^{2\pi k i/n}S_kS_k^*$.
\end{theorem}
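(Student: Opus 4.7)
My plan is to bootstrap the result directly from Lemmas \ref{lemmaFix} and \ref{lemmaFix2} by showing that the isomorphism $\alpha\colon \CO_n \to C^*(T_1, \ldots, T_n)$ defined in \eqref{alpha} intertwines $\lambda_C$ with $\mathrm{Ad}(Z)$. First I would verify the key equivariance $\alpha\circ \lambda_C = \mathrm{Ad}(Z)\circ \alpha$. Because $T_l = \tfrac{1}{\sqrt{n}} Z^{l-1} V Z^{-l+1}$ and $Z^n = I$, a one-line computation gives $Z T_k Z^* = T_{k+1}$ for all $k$ (indices mod $n$), which is precisely $\mathrm{Ad}(Z)(\alpha(S_k)) = \alpha(S_{k+1}) = \alpha(\lambda_C(S_k))$. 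Hence $\alpha$ restricts to a $*$-isomorphism between $\CO_n^{\lambda_C}$ and $C^*(T_1,\ldots, T_n)^{\mathrm{Ad}(Z)}$.

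Next I would invoke Lemma \ref{lemmaFix2} to identify the latter with the diagonal subalgebra
\[
\CD := \bigl\{(\lambda_C^{h-1}(x)\delta_{h,k})_{h,k=1}^n : x \in \CO_n\bigr\}\subset M_n(\CO_n).
\]
The map $\CO_n \ni x \mapsto (\lambda_C^{h-1}(x)\delta_{h,k})$ is an injective $*$-homomorphism onto $\CD$, because $\lambda_C$ is itself a $*$-automorphism and the $n$ diagonal ``blocks'' are simultaneously multiplicative. Composing with $\alpha^{-1}$ yields the desired isomorphism $\CO_n^{\lambda_C} \cong \CO_n$.

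For the generating set, I note that $T = \tfrac{1}{\sqrt{n}} \sum_{k=1}^n S_k$ is manifestly fixed by $\lambda_C$, and $v$ is an eigenvector for $\lambda_C$ (by Proposition \ref{prop31}), so each $\mathrm{Ad}(v^l)(T)$ lies in $\CO_n^{\lambda_C}$. Under $\alpha$, we computed in the proof of Lemma \ref{lemmaFix} that $\alpha(T) = s_1$ and $\alpha(v) = w$, together with the relation $w\, s_l\, w^* = s_{l-1}$ (indices mod $n$). By iteration this gives
\[
\alpha\bigl(\mathrm{Ad}(v^l)(T)\bigr) = \mathrm{Ad}(w^l)(s_1) = s_{1-l}, \quad l = 0, 1, \ldots, n-1,
\]
so as $l$ runs through $\{0,1,\ldots, n-1\}$, the images exhaust $s_1, s_2, \ldots, s_n$. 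Since under the identification $\CD \cong \CO_n$ the diagonal matrix $s_k$ corresponds to the generator $S_k$, the family $\{s_1,\ldots, s_n\}$ generates $\CD$; pulling back through $\alpha^{-1}$, the family $\{\mathrm{Ad}(v^l)(T): l = 0,\ldots, n-1\}$ generates $\CO_n^{\lambda_C}$.

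The only genuine point to check is the equivariance $\alpha\circ \lambda_C = \mathrm{Ad}(Z)\circ \alpha$; once this is in hand, both the structural isomorphism and the explicit generators follow essentially for free from the two preceding lemmas, so I do not anticipate any serious obstacle.
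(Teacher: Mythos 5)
Your proof is correct and follows essentially the same route as the paper's: the intertwining $\alpha\circ\lambda_C=\mathrm{Ad}(Z)\circ\alpha$, the identification of the $\mathrm{Ad}(Z)$-fixed points with the diagonal algebra via Lemma \ref{lemmaFix2}, and the recovery of the generators by pulling back $s_{1-l}=\mathrm{Ad}(w^l)(s_1)$ through $\alpha$. The only difference is that you verify the equivariance explicitly via $ZT_kZ^*=T_{k+1}$ (using $Z^n=I$), a step the paper asserts without computation; this is a correct and worthwhile addition.
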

\begin{proof}
Recall that by \eqref{alpha} $\alpha: \CO_n\to C^*(T_1,\ldots, T_n)$ is the $*$-isomorphism mapping $S_k$ to $T_k$, $k=1, 2, \ldots n$.
By definition $\alpha\circ \lambda_C={\rm Ad}(Z)\circ\alpha$.
It follows from the previous lemma that the fixed-point subalgebra of $\CO_n$ under $\lambda_C$ is $*$-isomorphic with $\CO_n$.
In addition, this copy of $\CO_n$ in $C^*(T_1, T_2, \ldots, T_n)$
 is generated by $s_l:=(\lambda_C^{h-1}(S_l)\delta_{h,k})_{h,k=1}^n$, $l=1,\ldots , n$.
The generators of $\CO_n^{\lambda_C}\subset\CO_n$ can thus be obtained as $\alpha^{-1}(s_l)$.
Indeed, we saw in the proof of Lemma \ref{lemmaFix}
 that $\alpha(T)=s_1$ and 
$\alpha({\rm Ad}(v^l)(T))=s_{1-l+n}$ for all $l=0,\ldots , n-1$.

\end{proof}
\begin{remark}
The inclusion  $\CO_n^{\lambda_C}\subset \CO_n$ is an example of a non-commutative self-covering of the type considered in \cite{AGI}. 
\end{remark}

\section{An application: the fixed-point algebra of the exchange automorphism of $\CO_{2n}$}\label{exchange}
We want to exploit the analysis of the previous section to describe the subalgebra $\CO_{2n}^{\lambda_E}\subset\CO_{2n}$ of all elements
fixed by the exchange automorphism. Notice that we are only dealing with Cuntz algebras associated with even numbers of  generating isometries.
If $Z\in M_{2n}(\CO_{2n})$ is the matrix  $(e^{2\pi (k-1)i/2n}\delta_{k,h})_{h,k=1}^{2n}$, we can define an inner automorphism $\rho$
by setting $\rho:={\rm Ad}(Z)^n={\rm Ad}(Z^n)$. Obviously, $\rho$ is an automorphims $\CO_{2n}$ of order $2$.
To ease the computations that we will need to make, we prefer to rename the generating isometries considered in the previous section.
We set $\TT_{k}:=T_k$ for all $k=1, 	\ldots , n$,
$\TT_{2n-k+1}:=\rho(\TT_k)=\rho(T_k)=T_{k+n}$.
By Lemma \ref{lemmaFix} the $C^*$-algebra generated by $\TT_1, \ldots , \TT_{2n}$ is $*$-isomorphic with $\CO_{2n}$ and by construction the exchange automorphism on this algebra coincides with $\rho$.
We denote by $\beta$ the $*$-isomorphism mapping $S_k$ to $\TT_k$ for $k=1, 	\ldots , 2n$.
As before, unless otherwise stated, operations between indices are always meant mod $2n$.
\begin{lemma}\label{lemmaFix2Ex}
The fixed-point algebra of $C^*(\TT_1,\ldots, \TT_n)$ under the 
order-$2$ automorphism $\rho$ is given by the matrices $A$ in $M_n(\CO_n)$ whose entries are all zero except those with indices $(h,k)$ with $h-k\in 2\IZ$
and such that $\lambda_C(A_{h,k})=A_{h+1,k+1}$ for any $h, k=1, 	\ldots , n$.
\end{lemma}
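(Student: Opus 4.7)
The plan is to follow the template of Lemma \ref{lemmaFix2}, substituting the single power $Z$ with $Z^n$ and working in the ambient algebra $M_{2n}(\CO_{2n})$. By Lemma \ref{lemmaFix} applied to $\CO_{2n}$, the subalgebra $C^*(\TT_1, \ldots, \TT_{2n})$ is identified with the set of matrices $A = (A_{h,k})_{h,k=1}^{2n}$ in $M_{2n}(\CO_{2n})$ whose entries satisfy the rigidity condition
\[
A_{h+1,k+1} = \lambda_C(A_{h,k}) \qquad \text{for every } h,k=1, \ldots, 2n \text{ (mod } 2n\text{)}.
\]
The task therefore reduces to intersecting this algebra with the fixed-point set of $\rho = {\rm Ad}(Z^n)$, so the two defining constraints appearing in the statement arise as separate contributions.

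The first concrete step is to compute $Z^n$ explicitly. Since $Z$ is diagonal with entries $e^{2\pi(k-1)i/(2n)}$, its $n$-th power is the diagonal matrix with entries $e^{\pi(k-1)i} = (-1)^{k-1}$, and in particular $(Z^n)^* = Z^n$. Writing out $\rho(A) = Z^n A (Z^n)^*$ entry by entry, one finds
\[
\rho(A)_{h,k} = (-1)^{h-1}(-1)^{k-1} A_{h,k} = (-1)^{h-k} A_{h,k}.
\]
Consequently, $\rho(A) = A$ holds if and only if $A_{h,k} = 0$ for every pair $(h,k)$ with $h-k$ odd, that is, $h - k \notin 2\IZ$.

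Conjoining this parity condition with the Cuntz-matrix relation from Lemma \ref{lemmaFix} immediately yields the description in the statement. As for obstacles, there are none of substance: the argument is a direct analogue of Lemma \ref{lemmaFix2}, the only modification being that the diagonal phases $e^{2\pi(h-k)i/(2n)}$ governing ${\rm Ad}(Z)$ are raised to the $n$-th power and therefore collapse to the sign $(-1)^{h-k}$, which is trivial precisely on the set $\{(h,k) : h \equiv k \pmod 2\}$. No new ingredient beyond keeping the index arithmetic mod $2n$ is required; the only care point is bookkeeping, since both the Cuntz-Takesaki-type cyclic relation and the $\rho$-invariance condition must be verified simultaneously and reading off their intersection yields precisely the matrices described in the lemma.
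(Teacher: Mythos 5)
Your proposal is correct and follows essentially the same route as the paper: compute that $\rho(A)_{h,k}=(-1)^{h-k}A_{h,k}$ (so $\rho$-invariance kills exactly the entries with $h-k$ odd) and intersect with the characterization of $C^*(\TT_1,\ldots,\TT_{2n})=C^*(T_1,\ldots,T_{2n})$ from Lemma~\ref{lemmaFix}. Your write-up is in fact slightly more careful than the paper's, which compresses both steps into one line (and cites Lemma~\ref{lemmaFix2} where the relevant ingredient is really the matrix characterization from Lemma~\ref{lemmaFix}).
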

\begin{proof}
Easy computations show that for any $h, k=1, 2, \ldots, n$ the entry   $\rho(A)_{h,k}$ is $(-1)^{h-k}A_{h,k}$.
The remaining part of the claim follows from Lemma \ref{lemmaFix2}.
\end{proof}
\begin{theorem}
The fixed-point algebra of $\CO_{2n}$ under $\lambda_E$ is generated by
$y^{j}_l:=(2n)^{-1/2}{\rm Ad}(\tilde v^l)(( \tilde v^{2j}T))$ for $j=0, \ldots , n-1$, $l=0, \ldots, 2n-1$, where 
\begin{align*}
&T=\frac{1}{\sqrt{2n}}\sum_{k=1}^{2n} S_k \\
&\tilde v=\sum_{k=1}^n e^{2\pi k i/(2n)}	S_kS_k^*+\sum_{k=1}^n e^{2\pi (k+n) i/(2n)}	S_{2n-k+1}S_{2n-k+1}^* 
\end{align*}
Moreover, for any fixed $j$, $C^*(\{y_l^j\}_{l=1}^{2n})$ is $*$-isomorphic with $\CO_{2n}$.
\end{theorem}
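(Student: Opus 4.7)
Plan: The strategy mirrors the proof of Section~\ref{cyclic}: pass to the matrix realization of $\CO_{2n}$ inside $M_{2n}(\CO_{2n})$ via the isomorphism $\beta$, which by construction intertwines $\lambda_E$ with $\rho=\mathrm{Ad}(Z^n)$, and then read off the claimed generators in matrix form, invoking Lemma~\ref{lemmaFix2Ex} to characterize $C^*(\TT_1,\ldots,\TT_{2n})^{\rho}$. As a preliminary check, I would verify that every $y^j_l$ actually lies in $\CO_{2n}^{\lambda_E}$. Splitting the two sums defining $\tilde v$ and applying $\lambda_E(S_k)=S_{2n-k+1}$, a direct computation shows $\lambda_E(\tilde v)=-\tilde v$, while $\lambda_E(T)=T$ is immediate from the symmetric role of the $S_k$. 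Since $\mathrm{Ad}(-u)=\mathrm{Ad}(u)$ for any unitary $u$, one concludes $\lambda_E(y^j_l)=(-1)^{2j}y^j_l=y^j_l$.

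Next I would identify $\beta(T)$ and $\beta(\tilde v)$ with the matrices $s_1$ and $w$ appearing in Lemma~\ref{lemmaFix}. The geometric-sum identity $\sum_{l=1}^{2n}e^{2\pi i l(h-k)/(2n)}=2n\,\delta_{h,k}$, combined with the fact that $\{\TT_1,\ldots,\TT_{2n}\}$ is merely a reordering of $\{T_1,\ldots,T_{2n}\}$, gives $\beta(T)=\mathrm{diag}(S_1,\ldots,S_{2n})=s_1$. Regrouping the two sums in the definition of $\tilde v$, whose supports jointly cover all projections $S_kS_k^*$ with coefficients $e^{2\pi ki/(2n)}$, yields $\beta(\tilde v)=\sum_{k=1}^{2n}e^{2\pi ki/(2n)}T_kT_k^*=w$, the shift matrix $(\delta_{p-q+1,0})$. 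Iterating the identity $ws_lw^*=s_{l-1}$ from Lemma~\ref{lemmaFix} then produces $w^ls_1w^{-l}=s_{1-l}$ (indices mod $2n$), so that, up to the scalar $(2n)^{-1/2}$, the image $\beta(y^j_l)=w^{2j}s_{1-l}$ is a matrix whose unique nonzero entry in row $h$ is $S_{h-l-2j}$ placed in column $h-2j$.

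With this concrete matrix description, the Cuntz relations for the appropriately rescaled family $\{(2n)^{1/2}y^j_l\}_{l=1}^{2n}$ with $j$ fixed follow from two elementary identities: $((2n)^{1/2}y^j_l)^*(2n)^{1/2}y^j_{l'}=\delta_{l,l'}I$ via $S_{h-l}^*S_{h-l'}=\delta_{l,l'}$, and $\sum_{l=1}^{2n}(2n)\,y^j_l(y^j_l)^*=I$ via $\sum_m S_mS_m^*=1$. Universality and simplicity of $\CO_{2n}$ then yield $C^*(\{y^j_l\}_{l=1}^{2n})\cong\CO_{2n}$. For the global generation claim, Lemma~\ref{lemmaFix2Ex} asserts that an element of $C^*(\TT_1,\ldots,\TT_{2n})^{\rho}$ is uniquely determined by its $n$ first-row entries $A_{1,1},A_{1,3},\ldots,A_{1,2n-1}$, which may be chosen freely in $\CO_{2n}$. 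For each $j\in\{0,\ldots,n-1\}$ the first row of $\beta(y^j_l)$ places a scalar multiple of $S_{1-l-2j}$ at column $1-2j\pmod{2n}$, and as $j$ varies these columns exhaust precisely the odd columns $\{1,3,\ldots,2n-1\}$; forming products, adjoints, and linear combinations of the $y^j_l$'s then realizes arbitrary words in $S_1,\ldots,S_{2n}$ at each admissible column, which by density forces $C^*(\{y^j_l\}_{j,l})=\CO_{2n}^{\lambda_E}$. The \emph{main obstacle} is precisely this generation step: although each individual $C^*(\{y^j_l\}_l)$ is already a full copy of $\CO_{2n}$, these copies sit on different even off-diagonals of the matrix picture, and some careful bookkeeping is required to certify that their joint products and adjoints populate every admissible first-row entry with a dense subset of $\CO_{2n}$---this is exactly where Lemma~\ref{lemmaFix2Ex} does its heavy lifting.
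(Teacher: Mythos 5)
Your proposal is correct and follows essentially the same route as the paper: pass to the matrix model via $\beta$, invoke Lemma \ref{lemmaFix2Ex}, identify $\beta(T)=s_1$ and $\beta(\tilde v)=w$, and verify the Cuntz relations for the elements $w^{2j}s_{1-l}$ on the even off-diagonals. The only additions are welcome sanity checks the paper omits (the direct verification that $\lambda_E(\tilde v)=-\tilde v$ so each $y^j_l$ is fixed, and the observation that it is the rescaled family $(2n)^{1/2}y^j_l$, not the $y^j_l$ themselves, that satisfies the Cuntz relations).
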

\begin{proof}
By Lemma \ref{lemmaFix2Ex}, an element in the fixed-point algebra is determined by the $n$ non-zero elements on the first row of the matrix. 
The elements $$(\ts_l^j)_{h,k}:=(2n)^{-1/2}{\rm Ad}(w^l)(( w^{2j}s_1))_{h,k}=(2n)^{-1/2}S_{k+l}\delta_{k-h, -2j}$$ 
$j=0, \ldots , n-1$ and $l=0,\ldots , 2n-1$ are easily seen to generate the algebra. 
For every fixed $j$, the $C^*$-algebra $C^*(\{\ts^j_l\}_{l=0}^{2n-1})$ is $*$-isomorphic with $\CO_{2n}$. In other terms, the isometries of the set $\{\ts^j_l\}_{l=0}^{2n-1}$ satisfy the Cuntz
relations, as easily follows from the equality $\tilde{s}_l^j=w^{2j}s_l$ for any $j=0, 1, \ldots n-1$ and $l=0, 1, \ldots, 2n-1$.\\
Now in order to find the generators of $\CO_{2n}^{\lambda_E}$, it suffices to compute $\beta^{-1}(\ts_0^0)$ and $\beta^{-1}(w)=\beta^{-1}(\alpha(v))$.
Clearly, $\beta^{-1}(\ts_0^0)=\alpha^{-1}(s_1)=T$. As for $\beta^{-1}(w)$, the following computations show it equals
the $\widetilde{v}$ in the statement. Indeed, one has
\begin{align*}
\beta(\tilde v)&=\beta\left( \sum_{k=1}^n e^{2\pi k i/(2n)}	S_kS_k^*+\sum_{k=1}^n e^{2\pi (k+n) i/(2n)}	S_{2n-k+1}S_{2n-k+1}^*\right)\\
&=\sum_{k=1}^n e^{2\pi k i/(2n)}	T_kT_k^*+\sum_{k=1}^n e^{2\pi (k+n) i/(2n)}	T_{k+n}T_{k+n}^*=\alpha(v)
\end{align*}
\end{proof}

\section{A no-go result on a Choi-Latr\'emoli\`ere type matrix model for $\CQ_2$}\label{nogo}
When $n=2$, the construction described in the previous section provides us with a pair of orthogonal  isometries $T_1$ and $T_2$ that generate a (proper) C$^*$-subalgebra of $M_2(\IC)\otimes \CO_2$
 $*$-isomorphic with $\CO_2$.  The generating isometries take now the simpler form
$$
T_1=\frac{1}{\sqrt{2}}\left(\begin{array}{cc}
S_1 & S_2\\
S_1 & S_2\\
\end{array}\right)
\qquad 
T_2=\frac{1}{\sqrt{2}}\left(\begin{array}{cc}
S_1 & -S_2\\
-S_1 & S_2\\
\end{array}\right)
$$
The principal merit of this construction of the Cuntz algebra $\CO_2$, which was first pointed out in \cite[Lemma 1.2]{CL}, is that  it is  in a sense the more economical 
realization in which the flip-flop is implemented by a unitary of a particularly simple form. Indeed,  the flip-flop $\lambda_f$ can be seen as the restriction to $\CO_2$
of the inner automorphism implemented by the unitary $Z$, \cite[Lemma 1.3]{CL}, given by
$$
Z=\left(\begin{array}{cc}
1 & 0\\
0 & -1\\
\end{array}\right)
$$
More explicitly, the equalities $ZT_2Z^*=T_1$ and $ZT_1Z^*=T_2$ are seen at once to hold.\\
It is worth recalling that the flip-flop is well known to be an outer automorphism, which makes the above presentation of
$\CO_2$ as a subalgebra of $M_2(\CO_2)$ even more interesting. This obviously implies that  $Z$ cannot belong to $C^*(T_1,T_2)\cong \CO_2$.
In fact, adding $Z$ to $C^*(T_1, T_2)$ yields the whole $M_2(\IC)\otimes \CO_2$, as observed in \cite[Remark 2.4]{CL}.
In \cite{ACR} the flip-flop was shown to uniquely extend to an automorphism $\widetilde{\lambda_f}$ of the $2$-adic ring $C^*$-algebra $\CQ_2$ with
$\widetilde{\lambda_f}(U)=U^*$.  Furthermore, its action on $\CQ_2$ is still outer. This is actually
a consequence of a general result proved in \cite{ACR} that any automorphism of $\CQ_2$ that sends $U$ to $U^*$ is automatically outer.
At this point one might wonder whether the realization of $\CO_2$ as a subalgebra of $M_2(\CO_2)$ is compatible with the inclusion
$\CO_2\subset\CQ_2$. Phrased differently, one may seek to find a unitary $V\in M_2(\CQ_2)\supset M_2(\CO_2)$ such that
$T_2V=V^2T_2$ and $T_1=VT_2$ in such a way that $\widetilde{\lambda_f}$ is still implemented by $Z$.
Possibly because of the rigidity of the inclusion of the Cuntz algebra in the $2$-adic ring $C^*$-algebra, which is discussed at length in \cite{ACR},
the answer to this question is negative, as the result below shows.

\begin{theorem}
There exists no   unitary $V\in M_2(\IC)\otimes \CQ_2$ such that $VT_2=T_1$, $VT_1=T_2V$, and
$ZVZ^*=V^*$.
\end{theorem}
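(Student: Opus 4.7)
The plan is to assume such a unitary $V = (V_{ij})\in M_2(\CQ_2)$ exists, reduce it to a rigid form, and then play the linear relations coming from $VT_1 = T_2V$ against the quadratic relation coming from unitarity on a single well-chosen expression.

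First I would use the three constraints together to pin down $V$. Expanding $VT_2 = T_1$ entrywise yields $V_{11}-V_{12} = V_{21}-V_{22} = \xi$, where $\xi := S_1S_1^*-S_2S_2^*$. Writing the four entries of $VT_1 = T_2V$, summing the two rows and using $S_1S_1^*+S_2S_2^* = 1$ gives $V_{11}+V_{12}+V_{21}+V_{22}=0$. The symmetry $ZVZ^* = V^*$ with $Z = \mathrm{diag}(1,-1)$ forces $V_{11}, V_{22}$ self-adjoint and $V_{21} = -V_{12}^*$. Combining these three groups of relations gives $V_{22} = -V_{11}$, $V_{21} = -V_{12}$, and $V_{12} = V_{12}^*$, so setting $a := V_{11}$ and $b := V_{12}$ one has
\[
V = \begin{pmatrix} a & b \\ -b & -a \end{pmatrix},\quad a = a^*,\ b = b^*,\ a - b = \xi,
\]
and $V^*V = I$ reduces to the single identity $\{\xi, b\}+2b^2 = 0$.

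Pushing $VT_1 = T_2V$ entrywise provides four ``sandwich'' identities, which with $P_j := S_jS_j^*$ read
\[
S_1^*bS_1 = \tfrac{b}{2}-P_2,\quad S_2^*bS_2 = P_2-\tfrac{b}{2},\quad S_1^*bS_2 = S_2^*bS_1 = \tfrac{b}{2}.
\]
The key move now is to evaluate $S_1^*b^2S_2$ in two different ways. Inserting $P_1+P_2 = 1$ between the two factors of $b$ and substituting the four identities gives
\[
S_1^*b^2S_2 = (S_1^*bS_1)(S_1^*bS_2)+(S_1^*bS_2)(S_2^*bS_2) = \bigl(\tfrac{b}{2}-P_2\bigr)\tfrac{b}{2}+\tfrac{b}{2}\bigl(P_2-\tfrac{b}{2}\bigr) = \tfrac{1}{2}[b,P_2].
\]
On the other hand, the unitarity relation rewrites $b^2 = -\tfrac{1}{2}(\xi b + b\xi)$; since $\xi S_1 = S_1$ (hence $S_1^*\xi = S_1^*$) and $\xi S_2 = -S_2$, the two summands of $S_1^*(\xi b + b\xi)S_2$ equal $S_1^*bS_2$ and $-S_1^*bS_2$ respectively and cancel, so $S_1^*b^2S_2 = 0$.

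Equating the two expressions forces $[b, P_2] = 0$, so $b$ commutes with $P_2$ (hence with $P_1$ and with $\xi$) and is block-diagonal: $b = P_1bP_1+P_2bP_2$. Since $P_1S_2 = 0$ and $S_1^*P_2 = 0$, this makes $S_1^*bS_2 = 0$; compared with $S_1^*bS_2 = b/2$ this forces $b = 0$, and substituting back into $S_1^*bS_1 = b/2-P_2$ gives $0 = -P_2$, contradicting $P_2 \neq 0$. The main obstacle is spotting that $S_1^*b^2S_2$ is the right single quantity on which the linear sandwich identities and the quadratic unitarity relation can be played against each other; once this is in hand the argument is elementary and uses no specific feature of $\CQ_2$ beyond the Cuntz relations for $S_1, S_2$.
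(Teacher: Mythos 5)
Your reduction of $V$ to the form $\left(\begin{smallmatrix} a & b\\ -b & -a\end{smallmatrix}\right)$ with $a=a^*$, $b=b^*$, $a-b=\xi$, and the collapse of unitarity to the single relation $2b^2+\xi b+b\xi=0$, are both correct and consistent with what the paper extracts. The gap is in the sandwich identities. The $(1,2)$ entry of $VT_1=T_2V$ reads $(a+b)S_2=S_1b-S_2d$; with $d=-a$ and $a=b+\xi$, multiplying on the left by $S_2^*$ gives $2S_2^*bS_2-1=a=b+\xi$, hence
\[
S_2^*bS_2=\tfrac{b}{2}+\tfrac{1+\xi}{2}=\tfrac{b}{2}+P_1,
\]
not $P_2-\tfrac{b}{2}$ (your value would force $b=-\xi$, which is not available). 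The other three identities are correct. This one constant is decisive: with the corrected value your first evaluation becomes
\[
S_1^*b^2S_2=\bigl(\tfrac{b}{2}-P_2\bigr)\tfrac{b}{2}+\tfrac{b}{2}\bigl(\tfrac{b}{2}+P_1\bigr)=\tfrac{b^2}{2}+\tfrac{1}{2}\bigl(bP_1-P_2b\bigr),
\]
so the $b^2$ terms no longer cancel, and equating with the (correct) second evaluation $S_1^*b^2S_2=0$ yields $b^2+bP_1-P_2b=0$. Writing $\xi=1-2P_2$, this is exactly the unitarity relation $2b^2+\{\xi,b\}=0$ again. The comparison therefore produces no new relation, $[b,P_2]=0$ is not obtained, and the contradiction disappears. (The same happens for $S_1^*b^2S_1$ and $S_2^*b^2S_2$: every such double evaluation reproduces unitarity.)

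The failure is structural rather than a repairable slip: the four sandwich identities are equivalent to the single intertwining relation $b=TbT^*+(S_2P_1S_2^*-S_1P_2S_1^*)$ with $T=\tfrac{1}{\sqrt2}(S_1+S_2)$, and inserting $P_1+P_2=1$ inside $b^2$ merely iterates this relation, so playing it against the quadratic relation can only return the quadratic relation. Your closing remark that the argument ``uses no specific feature of $\CQ_2$ beyond the Cuntz relations'' should have been a warning sign: the paper's proof does use $\CQ_2$ essentially. It first pins $V$ down completely to an explicit order-four unitary, and then derives the contradiction from the fact that $V$ together with $T_2$ would satisfy the defining relations of $\CQ_2$, so that $C^*(V)\cong C^*(U)\cong C(\IT)$ would force $V$ to have full circle spectrum, incompatible with $V^2=-I$. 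Any successful argument will need some input of this kind (or at least a finer analysis of the relation $b=TbT^*+g$) beyond the linear and quadratic identities you isolated.
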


The proof is by contradiction. Le us suppose that there does exist 
 a unitary  
$$
V=\left(\begin{array}{cc}
a & b\\
c & d\\
\end{array}\right)\in M_2(\IC)\otimes \CQ_2
$$
that enjoys the properties listed in the statement, for some $a, b, c, d \in\CQ_2$.
The condition  $VT_2=T_1$ reads as 
\begin{align*}
VT_2&=\left(\begin{array}{cc}
a & b\\
c & d\\
\end{array}\right)
\frac{1}{\sqrt{2}}\left(\begin{array}{cc}
S_1 & -S_2\\
-S_1 & S_2\\
\end{array}\right)=
\frac{1}{\sqrt{2}}\left(\begin{array}{cc}
S_1 & S_2\\
S_1 & S_2\\
\end{array}\right)=T_1
\end{align*}
while $T_2V=VT_1$  rewrites as
\begin{align*}
VT_1&=\left(\begin{array}{cc}
a & b\\
c & d\\
\end{array}\right)
\frac{1}{\sqrt{2}}\left(\begin{array}{cc}
S_1 & S_2\\
S_1 & S_2\\
\end{array}\right)=
\frac{1}{\sqrt{2}}\left(\begin{array}{cc}
S_1 & -S_2\\
-S_1 & S_2\\
\end{array}\right)
\left(\begin{array}{cc}
a & b\\
c & d\\
\end{array}\right)
=T_2V
\end{align*}
Reading the former equalities componentwise, we are led to the following system of equations
\begin{align}
&aS_1-bS_1=S_1\\
&-aS_2+bS_2=S_2\\
&cS_1-dS_1=S_1\\
&-cS_2+dS_2=S_2\\
&aS_1+bS_1=S_1a-S_2c\\
&aS_2+bS_2=S_1b-S_2d\\
&cS_1+dS_1=-S_1a+S_2c\\
&cS_2+dS_2=-S_1b+S_2d
\end{align}
Equations (1) and (2) yield
$$
aS_1S_1^*-bS_1S_1^*+aS_2S_2^*-bS_2S_2^*=a-b=S_1S_1^*-S_2S_2^*=:F
$$
We observe that $F$ is a self-adjoint unitary of $\CQ_2$. Similarly (3) and (4) give
$$
cS_1S_1^*-dS_1S_1^*+cS_2S_2^*-dS_2S_2^*=c-d=S_1S_1^*-S_2S_2^*=F
$$
We still have to impose the last condition, namely that $ZVZ^*=V^*$. We have
\begin{align*}
ZVZ^*&=\left(\begin{array}{cc}
1 & 0\\
0 & -1\\
\end{array}\right)
\left(\begin{array}{cc}
a & b\\
c & d\\
\end{array}\right)
\left(\begin{array}{cc}
1 & 0\\
0 & -1\\
\end{array}\right)
=
\left(\begin{array}{cc}
a & -b\\
-c & d\\
\end{array}\right)\\
&=\left(\begin{array}{cc}
a^* & c^*\\
b^* & d^*\\
\end{array}\right)
=\left(\begin{array}{cc}
a & b\\
c & d\\
\end{array}\right)^*
\end{align*}
which yields an additional set of equations
\begin{align}
&a^*=a\\
&b^*=-c\\
&c^*=-b\\
&d^*=d
\end{align}
However, this implies that 
\begin{align*}
&-b=c^*=(d+F)^*=d^*+F^*=d+F
\end{align*}
Now equations (5) and (7) imply
\begin{align*}
&aS_1+bS_1=-cS_1-dS_1
\end{align*}
while equations (6) and (8) imply
\begin{align*}
&aS_2+bS_2=-cS_2-dS_2
\end{align*}
Therefore, we have 
\begin{align}
&a+b=-c-d
\end{align}
Since $a=b+F$, we have $a=a^*=b^*+F^*=-c+F$.
But because $a=F+b$ and $a=F-c$, we must have $b=-c$.
From (13) we get $a=-d$.
Since $VV^*=1$, we get these new equations
\begin{align}
&c^2+d^2=1\\
&dc+cd=0
\end{align}
Since $c=d+F$, we get 
\begin{align}
&0=(d+F)^2+d^2-1=d^2+F^2+dF+Fd-1=d^2+dF+Fd\\
&0=d(d+F)+(d+F)d=d^2=0
\end{align}
So far we have found that $d^2=0$ and $c^2=1-d^2=1$.
But $d=d^*$, so $d^*d=0$ and $d=0$, $c^2=1$.
We also have $a=-d=0$ and $c=d+F=F$ and $b=-c=-F$. Therefore we get 
$$
V=\left(\begin{array}{cc}
0 & -F\\
F & 0\\
\end{array}\right)\in M_2(\IC)\otimes \CO_2
$$
from which we see that $V^{2}=-I$. In particular, the spectrum 
of $V$ is a finite set.
Thus a contradiction have been arrived at  because if such a $V$ existed, then
its spectrum should in fact be the whole one-dimensional torus: see  {\it e.g.} \cite{ACR}, where $C^*(U)\subset\CQ_2$
is shown to be $^*$-isomorphic with $C(\IT)$.

\section*{Acknowledgements}
V.A. acknowledges the support from the Swiss National Science foundation through the SNF project no. 178756 (Fibred links, L-space covers and algorithmic knot theory).

\end{document}